\newtheorem{theorem}{Theorem}
\newtheorem{lemma}[theorem]{Lemma}
\newtheorem{corollary}[theorem]{Corollary}
\theoremstyle{remark}
\newtheorem{remark}[theorem]{Remark}
\newcommand\Z{\ensuremath{\mathbf Z}}
\newcommand\Q{\ensuremath{\mathbf Q}}\newcommand\R{\ensuremath{\mathbb R}}
\newcommand\C{\ensuremath{\mathbf C}}
\renewcommand\R{\ensuremath{\mathbf R}}
\renewcommand\P{\ensuremath{\mathbf P}}
\newcommand\Qb{{\overline\Q}}
\newcommand\Kb{{\overline K}}
\newcommand{\ra}{{\rightarrow}}
\newcommand\End{\operatorname{End}}
\newcommand\Gal{\operatorname{Gal}}
\newcommand\GL{\operatorname{GL}}
\newcommand\M{\operatorname{M}}
\newcommand\PGL{\operatorname{PGL}}
\newcommand{\comp}{\begin{picture}(6,5)(-3,-2)\put(0,1){\circle{2}} \end{picture}}\def\circ{\comp}
\def\cO{{\mathcal O}}
\def\cBl{{\cO^\times_\ell\backslash B_\ell^\times/\Q_\ell^\times}}
\newcommand\acc[2]{\ensuremath{{}^{#1}\hskip-0.1ex{#2}}}
\title[Parametrization of abelian $K$-surfaces with quaternionic multiplication]{Parametrization of abelian $K$-surfaces with quaternionic multiplication}
\author{Xavier Guitart}
\address{Departament de Matem\`atica aplicada II, Universitat Polit\`ecnica de Catalunya, Jordi Girona 1-3 (Edifici Omega) 08034, Barcelona}
\email{xevi.guitart@gmail.com}
\author{Santiago Molina}
\address{Departament de Matem\`atica aplicada IV, Universitat Polit\`ecnica de Catalunya, Av. V\'ictor Balaguer, s/n. 08800 Vilanova i la Geltr\'u}
\email{santimolin@gmail.com}
\date{\today}
\begin{document}

\begin{abstract}
We prove that the  abelian $K$-surfaces whose endomorphism algebra
is an indefinite rational quaternion algebra are parametrized, up
to isogeny, by the $K$-rational points of the quotient of certain
Shimura curves by the group of their Atkin-Lehner involutions.
\vspace{0.2cm}

{\noindent \it To cite this article: X. Guitart, S. Molina, C. R. Acad. Sci.
Paris, Ser. I 347 (2009).}
\end{abstract}

\maketitle

\section{Introduction}
 \label{sec:Introduction} Let $K$ be a number field. An Abelian variety
$A/\Kb$ is called an \emph{Abelian $K$-variety} if for each
$\sigma\in\Gal(\Kb/K)$ there exists an isogeny
 $\mu_\sigma:\acc\sigma A\ra A$ such
that $\psi\circ\mu_\sigma=\mu_\sigma \circ \acc\sigma\psi $ for
all $\psi\in\End(A)$. In the case $K=\Q$, an interesting type of
$\Q$-varieties are the \emph{building blocks} (namely, those whose
endomorphism algebra is a central division algebra over a totally
real number field $F$, with Schur index $t=1$ or $t=2$ and
$t[F:\Q]=\dim A$), since they are known to be the absolutely
simple  factors up to isogeny of the non-CM Abelian varieties of
$\GL_2$-type (see~\cite[Chapter 4]{Py}). After the validity of
Serre's conjecture~\cite[$3.2.4_?$]{Se} on representations of
$\Gal(\Qb/\Q)$, a theorem of Ribet (\cite[Theorem 4.4]{Ri})
implies that they are indeed the non-CM absolutely simple factors
up to isogeny of the modular Jacobians $J_1(N)$.

Elkies  proved in~\cite{El} that  non-CM Abelian $K$-varieties of
dimension one (also called elliptic $K$-curves) are parametrized
up to isogeny by the non-cusp  $K$-rational points without CM of
the curves $X_0(N)/W(N)$ for square-free positive integers $N$,
where $W(N)$ is the group of Atkin-Lehner involutions of $X_0(N)$.
In this note we adapt Elkies's original argument to prove an
analogous result for Abelian $K$-surfaces whose endomorphism
algebra is an indefinite rational quaternion algebra; in this
case, they are parametrized by the $K$-rational points of the
quotient of a Shimura curve by its group of Atkin-Lehner
involutions.

\section{Shimura curves, Atkin-Lehner involutions and isogenies}
\label{sec:Shimura varieties and Atkin-Lehner involutions} The aim
of this section is to recall the basic definitions and results
concerning  Shimura curves and  their interpretation as moduli
spaces of certain Abelian surfaces, and also to study the isogeny
class of such Abelian surfaces in this context. The presentation
of the background material is based mostly on the first chapter
of~\cite{BD}.

\subsection{ QM-Abelian surfaces and Shimura curves}

Let $B$ be an indefinite quaternion algebra over $\Q$ of
discriminant $D$, and let $\cO$ be an Eichler order in $B$ of
level $N$, which we suppose square-free and prime to $D$ (we allow
also the case $N=1$, in which $\cO$ is actually a maximal order).
An \emph{Abelian surface with QM by $\cO$} is a pair $(A,\imath)$,
where $A/\C$ is an Abelian surface and $\imath$ is an embedding
$\cO\hookrightarrow \End(A)$,  satisfying  that $H_1(A,\Z)\simeq
\cO$ as left $\cO$-modules (here the structure of $\cO$-module in
$H_1(A,\Z)$ is given by $\imath$). If the order $\cO$ is clear by
the context, we will call them just \emph{QM-Abelian surfaces}.

We will denote by $X(D,N)$ the Shimura curve defined over $\Q$
associated with the moduli problem of classifying isomorphism
classes of Abelian surfaces $(A,\imath)$ with QM by $\cO$. We
remark  that $X(D,N)$ can also be defined  as the moduli space for
quadruples $(A,\imath,P,Q_N)$, where $(A,\imath)$ is an Abelian
surface with QM by a maximal order $\cO_0$, $P$ is a principal
polarization on $A$ satisfying certain compatibility conditions
with $\imath$, and $Q_N$ is a level $N$-structure (i.e. a subgroup
of $A$ isomorphic to $\Z/N\Z\times \Z/N\Z$ and cyclic as
$\cO_0$-module). Actually, both moduli problems coincide. Indeed,
a result of Milne asserts that in this case there exists a unique
compatible principal polarization, so we can remove it from the
moduli problem. Moreover, considering the level $N$-structure is
equivalent to considering embeddings of the Eichler order
$\cO\hookrightarrow \End(A)$, and in this way we obtain the moduli
interpretation for $X(D,N)$ we started with, which is the one we
will use. The reader can consult~\cite[\S 0.3.2]{Cl} for more
details on the several moduli interpretations for $X(D,N)$.

Let $\hat \cO=\cO\otimes \hat\Z$, $\hat B=B\otimes \hat \Z$ and
$\P=\C \smallsetminus \R$. By fixing an isomorphism $B\otimes
\R\simeq \M_2(\R)$, we can make $B^\times$ act on $\P$ by
fractional linear transformations, and there is a canonical
identification $ X(D,N)(\C)\simeq \P/\cO^\times$. It associates to
$\tau\in \P$ the pair $(\C^2/\Lambda_\tau,\imath_\tau)$, where
$\Lambda_\tau$ is the lattice $ \cO\binom{1}{\tau}$ and
$\imath_\tau$ is the natural inclusion given by the action of
$\cO$ on $\C^2$. Starting from an Abelian surface $(A,\imath)$
with QM by $\cO$, we can write $A\simeq \C^2/\Lambda$ for some
lattice $\Lambda$. Since $\Lambda\simeq \cO$, and after scaling
the lattice if necessary, we have that $\Lambda\simeq
\cO\binom{1}{\tau}$ for some $\tau\in \P$, and this gives the
reverse map.

Since $B$ is indefinite we have that $\# (\hat\cO^\times\setminus
\hat B^\times/ B^\times)=1$, and this implies that
$$\P/\cO^\times\simeq (\hat\cO^\times \setminus \hat B^\times
\times \P)/B^\times\simeq (\hat\cO^\times \setminus \hat
B^\times/\Q^\times \times \P)/B^\times.$$ Note that the double
coset $\hat\cO^\times\backslash \hat B^\times/ \Q^\times$
represents the set of left fractional ideals in $B$, modulo the
relation given by the multiplication of the ideals by rational
numbers. Each fractional ideal $J$ can be multiplied by a rational
number to obtain an integral ideal $I$ which is contained in no
proper ideal of the form $k\cO$, with $k\in\Z$. In this way, we
can also identify $\hat\cO^\times\backslash \hat B^\times/
\Q^\times$ with the set of left integral ideals that are not
contained in any proper ideal of the form $k\cO$ with $k\in\Z$.
Therefore, any point in $X(D,N)(\C)$ can be represented by a pair
of the form $(I,\tau)$, where $I$ is a left ideal of $\cO$ and
$\tau$ belongs to $\C\smallsetminus\R$; it is easy to see that the
QM-Abelian surface corresponding to this point in the moduli
interpretation is
$\left(\C^2/I\binom{1}{\tau},\imath_\tau\right)$, were
$\imath_\tau$ is the natural inclusion given by the action of
$\cO$ on $\C^2$ (note that this gives a well defined action on
$\C^2/I\binom{1}{\tau}$ because $I$ is a left ideal).

Finally, since the class number of $\Q$ is $1$ we have that
$\Q^\times \hat\Z^\times=\hat\Q^\times$, and therefore we  also
have the identification $$X(D,N)\simeq (\hat\cO^\times \setminus
\hat B^\times/\hat\Q^\times \times \P)/B^\times.$$

\subsection{Trees and Atkin-Lehner involutions}
We have a decomposition into local double cosets
$$\hat\cO^\times\backslash \hat B^\times/\hat
\Q^\times\simeq{\prod_\ell}'\cO^\times_\ell\backslash
B_\ell^\times/\Q_\ell^\times,$$ where $\cO_\ell=\cO\otimes
\Z_\ell$, $B_\ell=B\otimes \Z_\ell$ and $\prod_\ell'$ denotes the
restricted product over all  primes. When $\ell\nmid DN$ then
$\cBl\simeq\PGL_2(\Z_\ell)\backslash \PGL_2(\Q_\ell)$, which is
identified with the set of  vertices of the Bruhat-Tits tree of
$\PGL_2(\Q_\ell)$, a regular tree of degree $\ell+1$. If $\ell\mid
N$ then, since we are assuming $N$ to be square-free, we have
$\cBl\simeq\Gamma_0(\ell)\backslash \PGL_2(\Q_\ell)$, which is
identified with the set of oriented edges of the Bruhat-Tits tree
of $\PGL_2(\Q_\ell)$. If $\ell\mid D$ then $\cBl$ has only two
elements, and we identify them with an oriented edge (that is,
each element corresponds to one orientation of the edge). Hence,
for $\ell\mid ND$ there is a natural involution on $\cBl$; namely,
the one that reverses  the orientation of the edges. This
involution extends to an \emph{Atkin-Lehner involution} $W_\ell$
on $X(D,N)$, and we denote by $W(D,N)=\langle W_\ell\, \colon \,
\ell\mid ND\rangle$. As usual, if $n\mid ND$ then $W_{n}$ stands
for the composition of all the $W_\ell$ with $\ell\mid n$.

A  maximal order $\cO_0$ such that $\cO\subseteq \cO_0$ gives rise
to a natural morphism $\phi: X(D,N)\ra X(D,1)$, which at the level
of complex points is the natural map $(\hat\cO^\times\backslash
\hat B^\times/\hat \Q^\times\times \P)/ B^\times\ra
(\hat\cO_0^\times\backslash \hat B^\times/\hat \Q^\times\times
\P)/ B^\times.$ We can use $\phi$ to define another morphism
$\varphi:X(D,N)\ra X(D,1)\times X(D,1)$ by means of
$\varphi(P)=(\phi(P),\phi(W_N(P)))$. If $(b_\ell)_\ell$ belongs to
$\prod_\ell'\cO^\times_\ell\backslash
B_\ell^\times/\Q_\ell^\times$ and $\tau$ is a non-real complex
number, then $((b_\ell)_\ell,\tau)$ represents a point in
$X(D,N)$. It is sent by $\phi$ to the point represented by
$((b_\ell')_\ell,\tau)$, where $b_\ell'=b_\ell$ for $\ell\nmid N$,
and $b_\ell'$ is the origin of the edge $b_\ell$ for $\ell\mid N$.
 This  interpretation of $\phi$ makes it clear the fact that
 $\varphi$ is injective.

\subsection{The isogeny class of a QM-Abelian surface}

An \emph{isogeny} between two QM-Abelian surfaces $(A,\imath)$ and
$(A',\imath')$ is an isogeny $\mu:A\ra A'$ that respects the
action of $\cO$, i.e. such that
$\imath'(\psi)\circ\mu=\mu\circ\imath(\psi)$ for all $\psi$ in
$\cO$. We will denote by $[(A,\imath)]$, or for ease of notation
just by $[A,\imath]$ in some cases, the isogeny class of
$(A,\imath)$; that is, the set of all QM-Abelian surfaces
isogenous to $(A,\imath)$ up to isomorphism. In the following
lemma we characterize $[A,\imath]$ as a subset of $X(D,N)(\C)$:\\

\begin{lemma}
Let $I\subseteq \cO$ be a left ideal, $\tau$ a non-real complex
number and let  $(A,\imath)$ be the point in $X(D,N)(\C)$
represented by $(I,\tau)$. Then $
[A,\imath]=(\hat\cO^\times\backslash\hat B^\times/\Q^\times\times
\{ \tau \})/B^\times\subseteq X(D,N)(\C)$. Moreover, if
$(A,\imath)$ does not have CM then we can identify $[A,\imath]$
with $\hat\cO^\times\backslash\hat B^\times/\Q^\times\times \{
\tau \}$.
\end{lemma}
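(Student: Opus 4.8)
The plan is to reduce an $\cO$-isogeny between the surfaces attached to $(I,\tau)$ and $(I',\tau')$ to linear-algebraic data over $B\otimes\R$, and then to read off that such an isogeny exists precisely when $\tau$ and $\tau'$ lie in the same $B^\times$-orbit. Writing $A=\C^2/\phi_\tau(I)$ and $A'=\C^2/\phi_{\tau'}(I')$, where $\phi_\tau\colon B\otimes\R\to\C^2$ is the $\R$-linear isomorphism $b\mapsto b\binom{1}{\tau}$, any isogeny is a $\C$-linear map $M\in\M_2(\C)$ with $M\phi_\tau(I)\subseteq\phi_{\tau'}(I')$ of finite index. The first step is to observe that compatibility with the $\cO$-action forces $M$ to commute with $\imath(\cO)$, hence with its $\R$-span $\M_2(\R)=B\otimes\R$ and therefore with its $\C$-span $\M_2(\C)$; thus $M$ is a scalar $\lambda\in\C^\times$.

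Next I would transport $\lambda$ back to $B\otimes\R$. Since $\phi_\tau,\phi_{\tau'}$ are left $B$-linear and $\lambda$ is central, the map $g=\phi_{\tau'}^{-1}\circ\lambda\circ\phi_\tau$ is a left $(B\otimes\R)$-linear automorphism of $B\otimes\R$, hence right multiplication by some $\beta\in(B\otimes\R)^\times$. The condition $M\phi_\tau(I)\subseteq\phi_{\tau'}(I')$ becomes $I\beta\subseteq I'$; since $I$ spans $B$ over $\Q$ and $I'\subseteq\cO\subseteq B$, this forces $\beta\in B$, so $\beta\in B^\times$. The second step records that $g$ intertwines the two complex structures $j_\tau,j_{\tau'}$ on $B\otimes\R$ (the left $B$-linear endomorphisms squaring to $-1$ that $\phi_\tau,\phi_{\tau'}$ pull back from multiplication by $i$ on $\C^2$); writing this out gives $j_{\tau'}=\beta^{-1}j_\tau\beta$.

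The heart of the argument is then the standard $\GL_2(\R)$-equivariant identification of $\P$ with the space of complex structures on $\R^2$, under which conjugation $j\mapsto\gamma j\gamma^{-1}$ corresponds to the fractional-linear action $\tau\mapsto\gamma\tau$. Through this, $j_{\tau'}=\beta^{-1}j_\tau\beta$ translates into $\tau'\in B^\times\tau$. Conversely, for any $\beta\in B^\times$ and any left ideal $I'$, the lattices $I\beta$ and $I'$ are commensurable (both are full lattices in $B$), so a suitable scalar yields an $\cO$-isogeny; taking $\beta=1$ shows in particular that every ideal class occurs with the same $\tau$. Hence the isogeny class is exactly the image in $X(D,N)(\C)$ of the slice at $\tau$, that is, $[A,\imath]=(\hat\cO^\times\backslash\hat B^\times/\Q^\times\times\{\tau\})/B^\times$, the residual $B^\times$-action being through $\mathrm{Stab}_{B^\times}(\tau)$ acting on the ideal classes.

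For the final assertion I would identify this stabilizer with the endomorphism algebra: specializing to $I'=I$, $\tau'=\tau$ shows $\End^0(A,\imath)$ equals the centralizer of $j_\tau$ in $B$, namely $\{\beta\in B:\beta\tau=\tau\}$. This $\Q$-subalgebra is either $\Q$ or an imaginary quadratic field, the latter being exactly the CM case; so when $(A,\imath)$ has no CM the stabilizer is $\Q^\times$, which acts trivially on $\hat\cO^\times\backslash\hat B^\times/\Q^\times$ since we have already quotiented by $\Q^\times$. The $B^\times$-action on the slice is therefore trivial and $[A,\imath]$ is identified with $\hat\cO^\times\backslash\hat B^\times/\Q^\times\times\{\tau\}$. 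The main obstacle I expect is the bookkeeping of the two equivariances — that compatibility with the $\cO$-action collapses $M$ to a scalar, and that this scalar's effect on the complex structure matches the fractional-linear action on $\P$ — together with the rationality step forcing $\beta\in B^\times$; once these are in place, both the description of $[A,\imath]$ and the non-CM refinement follow.
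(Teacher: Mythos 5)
Your proof is correct, but it takes a genuinely different route from the paper's. The paper only ever considers isogenies \emph{into} the fixed surface $(A,\imath)$: such an isogeny is the same as an inclusion $H_1(A',\Z)\subseteq H_1(A,\Z)\simeq\cO$, compatibility with the quaternionic action says exactly that this sublattice is a left ideal $I'$, and since the sublattice sits inside the same $\C^2$ with the same complex structure, the isogenous surface is automatically represented by $(I',\tau)$ with the \emph{same} $\tau$. So the paper never has to discuss the action on $\P$ at all, and it gets for free the quantitative refinement that degree-$n^2$ isogenies correspond to ideals of norm $n$. You instead classify all $\cO$-isogenies between two arbitrary representatives $(I,\tau)$ and $(I',\tau')$: the Schur-type step (commuting with $\imath(\cO)$ forces the lift $M$ to be a scalar), the transport of that scalar to right multiplication by some $\beta\in B^\times$ with $I\beta\subseteq I'$, and the $\GL_2(\R)$-equivariant identification of $\P$ with complex structures on $B\otimes\R$, which converts $j_{\tau'}=\beta^{-1}j_\tau\beta$ into the statement $\tau'\in B^\times\tau$. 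This costs you the complex-structure bookkeeping the paper avoids, but it buys a complete description of the isogenies between any two parametrized pairs and makes the last step self-contained: you \emph{derive} that $\End(A,\imath)\otimes\Q$ is the stabilizer $\{\beta\in B:\beta\tau=\tau\}$, hence $\Q$ or an imaginary quadratic field, so that non-CM means trivial stabilizer, whereas the paper simply asserts that in the non-CM case any $b\in B^\times$ fixing $\tau$ lies in $\Q^\times$. The closing argument --- trivial stabilizer modulo $\Q^\times$ implies the slice $\hat\cO^\times\backslash\hat B^\times/\Q^\times\times\{\tau\}$ injects into $X(D,N)(\C)$ --- is the same in both proofs.
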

\begin{proof}
First of all, we claim that there is a one-to-one correspondence
between the isogenies $(A',\imath')\ra (A,\imath)$ of degree $n^2$
and the left ideals of $\cO$ of norm $n$. Indeed, if we write
$A\simeq\C^2/H_1(A,\Z)$ and $A'\simeq\C^2/H_1(A',\Z)$, giving an
isogeny $A'\ra A$ is equivalent to giving an inclusion
$H_1(A',\Z)\subseteq H_1(A,\Z)\simeq \cO$, and the condition on
the isogeny to be compatible with the action of $\cO$ translates
into the condition on $H_1(A',\Z)$ to be a left ideal of $\cO$. In
addition, if the degree of the isogeny is $n^2$, then
$\#\cO/H_1(A',\Z)=n^2$, and therefore the norm of the ideal
$H_1(A',\Z)$ is equal to $n$. This proves the claim. Now, we
observe that ideals of the form $k\cO$ for some $k\in\Z$ give rise
to isogenies $(A',\imath')\ra (A,\imath)$ with $(A',\imath')\simeq
(A,\imath)$, because they correspond to the isogenies
`multiplication by $k$' in $(A,\imath)$. Since
$\hat\cO^\times\backslash \hat B^\times/\Q^\times$ is the set of
all left ideals not contained in any proper ideal of the form
$k\cO$ with $k\in \Z$, we see that the orbit
$(\hat\cO^\times\backslash \hat B^\times/\Q^\times\times \{ \tau
\})/B^\times\subseteq X(D,N)(\C)$ contains a representative for
each $(A',\imath')$ isogenous to $(A,\imath)$. If $(A,\imath)$
does not have CM this orbit can be identified with
$\hat\cO^\times\backslash\hat B^\times/\Q^\times\times \{ \tau
\}$, because then any element  $b\in B^\times$  such that
$b\tau=\tau$ necessarily belongs to $ \Q^\times$.
\end{proof}
\begin{corollary}\label{cor}
If $(A,\imath)$ does not have CM, then $\phi([A,\imath])\subseteq
[\phi(A,\imath)]$ and $W_n([A,\imath])=[A,\imath]$ for all $n\mid
ND$.
\end{corollary}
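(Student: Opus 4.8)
The plan is to deduce both assertions from a single structural observation: in the moduli description $X(D,N)(\C)\simeq(\hat\cO^\times\backslash\hat B^\times/\hat\Q^\times\times\P)/B^\times$, both $\phi$ and the involutions $W_\ell$ act only on the finite component $\hat\cO^\times\backslash\hat B^\times/\hat\Q^\times$ and leave the coordinate $\tau\in\P$ untouched. First I would reconcile the Lemma with this description: since $\hat\Z^\times\subseteq\hat\cO^\times$ is central in $\hat B^\times$, one has $\hat\cO^\times\backslash\hat B^\times/\hat\Q^\times=\hat\cO^\times\backslash\hat B^\times/\Q^\times$, so the Lemma identifies $[A,\imath]$ with the full \emph{fiber over $\tau$}, namely the set of points of $X(D,N)(\C)$ whose $\P$-coordinate equals $\tau$. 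The whole argument then reduces to checking that $\phi$ and the $W_\ell$ preserve these fibers.

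For the first assertion I would invoke the explicit description of $\phi$ on complex points given above. It is induced by the inclusion $\cO\subseteq\cO_0$, so on the finite component it is the natural projection $\hat\cO^\times\backslash\hat B^\times/\hat\Q^\times\to\hat\cO_0^\times\backslash\hat B^\times/\hat\Q^\times$, and it fixes the $\P$-coordinate. Hence $\phi$ carries the fiber over $\tau$ in $X(D,N)(\C)$ into the fiber over $\tau$ in $X(D,1)(\C)$. By the Lemma applied to the maximal order $\cO_0$, the latter fiber is exactly $[\phi(A,\imath)]$, which yields $\phi([A,\imath])\subseteq[\phi(A,\imath)]$.

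For the second assertion I would recall that $W_\ell$ acts on the local double coset $\cBl$ by reversing the orientation of the edges, and that this action is realized by left multiplication by a purely local normalizing element $\omega_\ell$ supported at $\ell$ (the class of a local uniformizer when $\ell\mid D$, and of an Atkin-Lehner element of the Eichler order when $\ell\mid N$). Because $\omega_\ell$ normalizes $\hat\cO^\times$, such left multiplication is a well-defined bijection of $\hat\cO^\times\backslash\hat B^\times/\hat\Q^\times$; moreover, being a left multiplication by a finite idele, it commutes with the diagonal right $B^\times$-action and leaves $\tau$ fixed. Composing over the primes $\ell\mid n$, the involution $W_n$ carries the fiber over $\tau$ bijectively to itself. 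Since that fiber is precisely $[A,\imath]$, we conclude $W_n([A,\imath])=[A,\imath]$.

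The routine bookkeeping is harmless; the one point that genuinely requires care is verifying that the Atkin-Lehner action does not move $\tau$ --- equivalently, that the orientation-reversing involution on the local trees is induced by a \emph{finite} idele acting by left multiplication, rather than by an element of $B^\times$ acting on $\P$. Once this is pinned down, together with the identification of $[A,\imath]$ with the fiber over $\tau$, both containments are immediate.
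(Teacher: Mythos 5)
Your proposal is correct and follows essentially the same route as the paper: both identify $[A,\imath]$ via the Lemma with the fiber $\hat\cO^\times\backslash\hat B^\times/\Q^\times\times\{\tau\}$ and then observe that $\phi$ and $W_n$ act only on the finite component, leaving $\tau$ fixed, so that $\phi$ maps this fiber into the corresponding fiber $[\phi(A,\imath)]$ for $\cO_0$ and $W_n$ permutes it. Your elaboration of the Atkin--Lehner action as left multiplication by a local normalizing element $\omega_\ell$ is simply an explicit unwinding of what the paper compresses into ``by the definition of $W_n$.''
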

\begin{proof}
By  the lemma and by the description of $\phi$ at the level of
complex points we have that
$$\phi([A,\imath])=\phi(\hat\cO^\times\backslash \hat B^\times/
\Q^\times\times \{ \tau \})\subseteq \hat\cO_0^\times\backslash
\hat B^\times/ \Q^\times\times \{ \tau \}=[\phi(A,\imath)].$$ By
the definition of $W_n$ we see that
$$W_n([A,\imath])=W_n(\hat\cO^\times\backslash \hat B^\times/
\Q^\times\times \{ \tau \})=\hat\cO^\times\backslash \hat
B^\times/ \Q^\times\times \{ \tau \}=[A,\imath],$$ and this gives
the second statement.
\end{proof}

\section{$K$-surfaces with QM}
\label{Q-surfaces with QM}  Let $\cO$ be an Eichler order of
square-free level $N$ in an indefinite rational quaternion algebra
of discriminant $D$, and let $\cO_0$ be a maximal order  with
$\cO\subseteq \cO_0$. Let $P$ be a non-CM $K$-rational point in
$X(D,N)/W(D,N)$. A preimage $Q\in X(D,N)$ of $P$ under the
quotient map corresponds to an Abelian surface $(A,\imath)/\Kb$
with QM by $\cO$, and $\phi(Q)$ corresponds to an Abelian surface
$(A_0,\imath_0)/\Kb$ with QM by $\cO_0$. For each
$\sigma\in\Gal(\Kb/K)$ there exists an integer $n\mid ND$ such
that $\acc\sigma Q=W_n(Q)$, and therefore
$\acc\sigma\phi(Q)=\phi(\acc\sigma Q)=\phi(W_n(Q))$. But
$\acc\sigma \phi(Q)$ corresponds to $(\acc\sigma
A_0,\acc\sigma\imath_0)$, and by corollary~\ref{cor} we have that
$\phi(W_n(Q)) $ belongs to $[A_0,\imath_0]$. This means that
$(A_0,\imath_0)$ and $(\acc\sigma A_0,\acc\sigma\imath_0)$ are
isogenous for all $\sigma\in\Gal(\Kb/K)$; we say that
$(A_0,\imath_0)$ is an \emph{Abelian $K$-surface with QM by
$\cO_0$}. We also have the following converse to this
construction:\\

\begin{theorem}
Let $(A_0,\imath_0)$ be a non-CM  Abelian $K$-surface with QM by
the maximal order $\cO_0$. Then there exists a square-free $N$,
depending only on the isogeny class of $(A_0,\imath_0)$, such that
$(A_0,\imath_0)$ is isogenous to the QM-Abelian surface obtained
by the above procedure applied to some $K$-rational point in
$X(D,N)/W(D,N)$. Moreover,  if a $K$-rational point of
$X(D,N')/W(D,N')$ parametrizes an Abelian $K$-surface with QM
isogenous to $(A_0,\imath_0)$, then $N\mid N'$.
\end{theorem}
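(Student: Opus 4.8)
The plan is to translate the property of being a non-CM Abelian $K$-surface into a statement about the Galois action on the isogeny class, and then to analyze that action prime by prime on the Bruhat--Tits trees, producing the level structure by a fixed-point argument. First I would fix a point $Q_0\in X(D,1)(\Kb)$ corresponding to $(A_0,\imath_0)$ and, for each $\sigma\in\Gal(\Kb/K)$, use the QM-isogeny $\mu_\sigma\colon\acc\sigma{A_0}\ra A_0$ furnished by the $K$-surface hypothesis. By the Lemma and its proof, $\mu_\sigma$ corresponds to a left ideal of $\cO_0$, hence to a class $\beta_\sigma\in\hat\cO_0^\times\backslash\hat B^\times/\Q^\times=[A_0,\imath_0]$, with $\acc\sigma{Q_0}=\beta_\sigma\cdot Q_0$. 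The relation $\mu_\sigma\circ\acc\sigma{\mu_\tau}=c_{\sigma,\tau}\,\mu_{\sigma\tau}$ with $c_{\sigma,\tau}\in\Q^\times$ (the composite is again a QM-isogeny, so differs from $\mu_{\sigma\tau}$ only by a central element) shows, after choosing compatible identifications of the $H_1$'s with $\cO_0$, that $\sigma\mapsto\beta_\sigma$ is a continuous homomorphism $\rho\colon\Gal(\Kb/K)\ra\hat B^\times/\Q^\times$. Decomposing over primes yields, for $\ell\nmid D$, a homomorphism $\rho_\ell\colon\Gal(\Kb/K)\ra B_\ell^\times/\Q_\ell^\times\simeq\PGL_2(\Q_\ell)$ acting on the tree of $\PGL_2(\Q_\ell)$, and for $\ell\mid D$ a homomorphism to the order-two group $B_\ell^\times/\Q_\ell^\times\cO_{0,\ell}^\times$. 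I would first check that each $\rho_\ell$ has compact image and, for all but finitely many $\ell$, fixes the base vertex (is unramified); this uses that $A_0$ has good reduction outside a finite set and that the $\mu_\sigma$ are defined over a fixed finite extension of $K$.

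Next I would invoke the fixed-point theorem for bounded groups acting on trees: for each $\ell\nmid D$ the compact group $\rho_\ell(\Gal(\Kb/K))$ fixes either a vertex or the midpoint of a unique edge. I set $N$ to be the product of the finitely many primes $\ell\nmid D$ of the second kind; it is automatically square-free, since an edge-midpoint stabilizer moves the base vertex only to distance one, contributing a single factor of $\ell$. I then change the base point of the isogeny class to the common fixed vertex at every prime---legitimate because $\hat\cO_0^\times\backslash\hat B^\times/\Q^\times$ consists of mutually isogenous surfaces and almost all local choices are trivial---and equip it with the level-$N$ structure given by the fixed edges at $\ell\mid N$. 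This produces $Q\in X(D,N)(\C)$ for which each $\rho_\ell(\sigma)$ either fixes the local datum or reverses the relevant oriented edge, the reversal at $\ell\mid ND$ being precisely the Atkin--Lehner involution $W_\ell$. Hence for every $\sigma$ there is $n\mid ND$ with $\acc\sigma Q=W_n(Q)$; equality in $X(D,N)$, not merely after $\phi$, is verified through the injective map $\varphi=(\phi,\phi\circ W_N)$ by tracking orientations. The image $P$ of $Q$ in $X(D,N)/W(D,N)$ then satisfies $\acc\sigma P=P$ for all $\sigma$, so $P\in(X(D,N)/W(D,N))(K)$, and applying the construction described above to $P$ returns a surface isogenous to $(A_0,\imath_0)$.

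For minimality, suppose a $K$-rational point of $X(D,N')/W(D,N')$ parametrizes an Abelian $K$-surface isogenous to $(A_0,\imath_0)$; lift it to $Q'\in X(D,N')$ and project by $\phi$ into the same isogeny class, so $\acc\sigma{Q'}=W_{n'}(Q')$ for suitable $n'\mid N'D$. At any $\ell\mid N$ the group $\rho_\ell(\Gal(\Kb/K))$ fixes no vertex, so for some $\sigma$ the local action genuinely swaps two adjacent vertices; if $\ell\nmid N'$ and $\ell\nmid D$ then $Q'$ carries no level datum at $\ell$ and $W_{n'}$ acts trivially there, forcing $\phi(\acc\sigma{Q'})_\ell=\phi(Q')_\ell$ and hence a fixed vertex---a contradiction. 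Since $\ell\nmid D$ this gives $\ell\mid N'$, and as both are square-free, $N\mid N'$. The fixed-point type of each $\rho_\ell$ is invariant under conjugation, so it depends only on the isogeny class; this shows $N$ is well defined.

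The step I expect to be the main obstacle is the very first one: promoting the family $\{\mu_\sigma\}$, which a priori satisfies only the projective cocycle relation $\mu_\sigma\circ\acc\sigma{\mu_\tau}=c_{\sigma,\tau}\,\mu_{\sigma\tau}$, to a genuine continuous homomorphism $\rho$ into $\hat B^\times/\Q^\times$ with compact image. Concretely, one must pin down the integral identifications of the $H_1(\acc\sigma{A_0},\Z)$ with $\cO_0$ so that the Galois twist in $\acc\sigma{\beta_\tau}$ disappears and $\sigma\mapsto\beta_\sigma$ becomes multiplicative modulo the central $\Q^\times$. Once $\rho$ and its local components are available, the tree fixed-point theorem and the bookkeeping of edge orientations through $\varphi$ are comparatively formal.
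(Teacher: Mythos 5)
Your overall architecture parallels the paper's: analyze the Galois action prime by prime on the local trees $\cO_{0\ell}^\times\backslash B_\ell^\times/\Q_\ell^\times$, extract a fixed vertex or edge at each $\ell$, let $N$ be the product of the ``edge'' primes, then do the Atkin--Lehner bookkeeping through the injective map $\varphi$ and prove minimality via uniqueness of the fixed simplex. Your later steps are essentially the paper's proof (the paper's ``center of a path of maximum length in $\pi_\ell\langle A_0,\imath_0\rangle$'' is exactly the constructive form of the fixed-point theorem you invoke, and its ``cycle'' argument is your uniqueness statement). But your first step --- promoting $\sigma\mapsto\beta_\sigma$ to a continuous homomorphism $\rho\colon\Gal(\Kb/K)\ra\hat B^\times/\Q^\times$ --- is a genuine gap, which you flag yourself and never close. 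Moreover the fix you propose is a dead end as stated: Galois conjugation $A_0\mapsto\acc\sigma A_0$ is not continuous and does not act on singular homology at all, so no choice of ``integral identifications of the $H_1(\acc\sigma A_0,\Z)$ with $\cO_0$'' can relate the class of $\acc\sigma \mu_\tau$ to that of $\mu_\tau$. The claim you want is in fact true, but its proof requires replacing $H_1(\cdot,\Z)$ by $H_1(\cdot,\hat\Z)$, i.e.\ the adelic Tate module, on which Galois genuinely acts through torsion points: then $(\mu_\sigma)_*\circ\acc\sigma{}$ is a left $\hat\cO_0$-linear injective endomorphism of $\hat\cO_0$, hence right multiplication by some $g_\sigma\in\hat B^\times$, and the relation $\mu_\sigma\circ\acc\sigma\mu_\tau=c_{\sigma,\tau}\mu_{\sigma\tau}$ gives $g_{\sigma\tau}=c_{\sigma,\tau}^{-1}\,g_\tau g_\sigma$, an anti-homomorphism modulo $\Q^\times$. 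None of this appears in your proposal, and without it the group action your fixed-point theorem needs has not been constructed.

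The second point is that the homomorphism is unnecessary, which is precisely how the paper avoids your obstacle. Serre's fixed-point theorem (or Elkies' explicit center) only needs that $\Gal(\Kb/K)$ acts on the tree $\pi_\ell[A_0,\imath_0]$ by graph automorphisms with a bounded orbit, and both facts are immediate: conjugation takes QM-isogenies to QM-isogenies of the same degree, so it preserves adjacency and the fibers of $\pi_\ell$, and the orbit of the base vertex is $\pi_\ell\langle A_0,\imath_0\rangle$, finite because $A_0$ and $\imath_0$ are defined over a finite extension of $K$. The paper never realizes the action inside $\hat B^\times/\Q^\times$; it simply takes the center of a longest path in the finite Galois-stable set $\pi_\ell\langle A_0,\imath_0\rangle$. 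So you should either prove your homomorphism via Tate modules or drop it and act directly by graph automorphisms; as written, the foundation of the argument is missing. Two smaller remarks: your justification of square-freeness of $N$ is empty ($N$ is square-free by construction, being a product of distinct primes, and has nothing to do with translation distances), and your appeal to good reduction for the almost-all-$\ell$ statement is superfluous, since finiteness of the set of conjugates already forces $\pi_\ell\langle A_0,\imath_0\rangle$ to be a single vertex for all but finitely many $\ell$.
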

\begin{proof}
The pair $(A_0,\imath_0)$ gives a point in $X(D,1)$, that we can
represent by $((b_\ell)_\ell,\tau)$ for some
$(b_\ell)_\ell\in\prod_\ell' \cO_{0\ell}^\times\backslash
B_\ell^\times/\Q_\ell^\times$ and some complex number $\tau$.
Recall that $[A_0,\imath_0]=\prod_\ell'
\cO_{0\ell}^\times\backslash B_\ell^\times/\Q_\ell^\times\times \{
\tau \}$, and that for $\ell\nmid D$ we identify $
\cO_{0\ell}^\times\backslash B_\ell^\times/\Q_\ell^\times$ with an
homogenous tree of degree $\ell+1$: its vertexes are the pairs
$(A'_0,\imath'_0)$ isogenous to $(A_0,\imath_0)$ with an isogeny
of degree a power of $\ell$, and two vertexes are connected if
there exists an isogeny of degree $\ell^2$ between them. Also for
$\ell\mid D$ we identify $\cO_{0\ell}^\times\setminus
B_\ell^\times/\Q_\ell^\times$ with an oriented edge. Denote by
$\pi_\ell$ the projection $[A_0,\imath_0]\ra
\cO_{0\ell}^\times\backslash B_\ell^\times/\Q_\ell^\times$, and by
$\langle A_0,\imath_0\rangle$ the finite  set of Abelian surfaces
up to isomorphism with QM by $\cO_0$ that are
$\Gal(\Kb/K)$-conjugated to $(A_0,\imath_0)$. Note that $\langle
A_0,\imath_0\rangle\subseteq [A_0,\imath_0]$, because
$(A_0,\imath_0)$ is an Abelian $K$-surface with QM. 

We consider
the action of $\Gal(\Kb/K)$ on $\pi_\ell \langle
A_0,\imath_0\rangle$ defined by
$\acc\sigma(\pi_\ell(B,\jmath))=\pi_\ell(\acc\sigma
B,\acc\sigma\jmath)$. Note that $\pi_\ell\langle
A_0,\imath_0\rangle$ will contain a single vertex for all but
finitely many primes $\ell$. Following Elkies, for each $\ell$ we
construct an edge or a vertex of $\pi_\ell \langle
A_0,\imath_0\rangle$ fixed by $\Gal(\Kb/K)$: it is the central
vertex or edge of any path of maximum length  joining two vertexes
in $\pi_\ell \langle A_0,\imath_0\rangle$, and we will call it the
\emph{center} of $\pi_\ell\langle A_0,\imath_0\rangle$. It is a
well known property of trees that this vertex or edge does not
depend on the  path chosen, and since  $\Gal(\Kb/K)$ takes one
path of maximum length to another, it is clear that the center is
fixed by $\Gal(\Kb/K)$. Define $N$ to be the product of all the
primes $\ell\nmid D$ such that the center of $\pi_\ell\langle
A_0,\imath_0\rangle$ is an edge, and let $\cO$ be an Eichler order
of level $N$. The fact that $\pi_\ell[A_0,\imath_0]$ is a  tree
implies that if the center of $\pi_\ell\langle
A_0,\imath_0\rangle$ is an edge, then it is necessarily the only
edge or vertex in $\pi_\ell[A_0,\imath_0]$ fixed by $\Gal(\Kb/K)$
(otherwise there would exist a cycle; this can be  seen by
considering the action on the tree of a $\sigma\in \Gal(\Kb/K)$
that swaps the vertices of the central edge). Thus any pair in
$[A_0,\imath_0]$ produces the same $N$. 

For each $\ell\mid N$,
choose an orientation of the center and call $b_\ell'$ this
oriented edge in the graph $\PGL_2(\Z_\ell)\backslash
\PGL_2(\Q_\ell)$; recall that we can identify $b_\ell'$ with an
element of $ \cO_{\ell}^\times\backslash
B_\ell^\times/\Q_\ell^\times$. For each $\ell\nmid N$ let
$b_\ell'=b_\ell$, but viewed as an element in $
\cO_{\ell}^\times\backslash B_\ell^\times/\Q_\ell^\times$. Now the
pair $\left((b_\ell')_\ell,\tau\right)$ defines  a point
$Q=(A,\imath)\in X(D,N)(\Kb)$, with the property that $\phi(Q)\in
[A_0,\imath_0]$.  If we represent $\phi(Q)$ by
$((c_\ell)_\ell,\tau)$ and $\acc\sigma\phi(Q)$ by
$((c_\ell')_\ell,\tau)$, then $c_\ell=c_\ell'$ for all $\ell\nmid
ND$. But for some $\ell\mid N$,  $c_\ell'$ can be the vertex of
the center of $\pi_\ell\langle A_0,\imath_0\rangle$ which is
different from $c_\ell$, and for some $\ell\mid D$, $c_\ell'$ and
$c_\ell$ can have opposite orientation. If $n$ is the product of
the primes $\ell$ where $c_\ell$ and $c_\ell'$ differ, we have
that $\acc\sigma\phi(Q)=\phi(W_n(Q))$, which implies that
$\phi(\acc\sigma Q)=\phi(W_n(Q))$. A similar argument shows that
$\phi(W_N(\acc\sigma Q))=\phi(W_NW_n(Q))$. Therefore, by the
injectivity of the map $\varphi$ defined in the
Section~\ref{sec:Shimura varieties and Atkin-Lehner involutions}
we have that $\acc\sigma Q=W_n Q$, and the image of $Q$ by the
quotient map $X(D,N)\ra X(D,N)/W(D,N)$ is a $K$-rational point
$P$. Since $\phi(Q)\in [A_0,\imath_0]$, it is clear that applying
to $P$ the process for obtaining an Abelian $K$-surface with QM
described at the beginning of the section, we obtain a pair
isogenous to $(A_0,\imath_0)$. 

Finally, to see the last statement
in the theorem, note that if $\ell\mid N$ and $(A_0',\imath_0')$
comes from a $K$-rational point in $X(D,N')/W(D,N')$ for some $N'$
not divisible by $\ell$, then $\pi_\ell\langle A_0',\imath_0'
\rangle$ would be a  vertex fixed by $\Gal(\Kb/K)$, which is not
possible because $\pi_\ell [A_0,\imath_0]$ contains an edge fixed
by $\Gal(\Kb/K)$.
\end{proof}

\begin{remark} For all but finitely many values of $D$ and $N$ the curve
$X(D,N)/W(D,N)$ has genus at least 2 (see~\cite[Corollary 50
]{Cl}), and therefore it  has  a finite number of $K$-rational
points.
\end{remark}
\begin{remark}
 So far in this section we have seen that a $K$-rational point in
 the quotient
$X(D,N)/W(D,N)$ produces an Abelian $K$-surface $(A_0,\imath_0)$
with QM by a maximal order $\cO_0$, and that any such pair arises
from a $K$-rational point in $X(D,N)/W(D,N)$ for some square-free
$N$. This result  in fact gives slightly more information than the
strictly needed in the setting we described in the introduction;
hence, if we are interested only in Abelian $K$-surfaces with QM
up to isogeny, and we do not care about the precise embedding
$\imath$, we  can just forget  this information. Indeed, a
$K$-rational point in $X(D,N)/W(D,N)$ gives rise to an Abelian
$K$-surface $A$ whose endomorphism algebra is isomorphic to the
quaternion algebra over $\Q$ of discriminant $D$. Conversely,
given an Abelian $K$-surface with endomorphism algebra isomorphic
to the quaternion algebra over $\Q$ of discriminant $D$, we can
find in its isogeny class a variety $A_0$ such that there exists
an embedding $\imath_0:\cO_0\hookrightarrow \End(A_0)$ for some
maximal order $\cO_0$. Then this variety is in turn isogenous to
one arising from a $K$-rational point in $X(D,N)/W(D,N)$ for some
$N$.
\end{remark}

\section*{Acknowledgements}
We would like to thank Josep Gonz\'alez, Jordi Quer and V\'ictor
Rotger for their helpful comments on an earlier version of the
manuscript. We are also thankful to the referee, whose suggestions
helped to improve the presentation of the article.

\end{document}